\begin{document}
\mainmatter              

\title{High-order reliable numerical methods for epidemic models with non-constant recruitment rate}
\titlerunning{Numerical methods for epidemic models with non-constant recruitment rate}   
\author{Bálint Máté Takács\inst{1,2}, Gabriella Svantnerné Sebestyén\inst{1}  \and 
István Faragó\inst{1,2}}
\authorrunning{Bálint Máté Takács et al.} 
\institute{Department of Analysis and Operations Research
Institute of Mathematics
Budapest University of technology and Economics
Műegyetem rkp. 3., H-1111 Budapest, Hungary\\
\email{takacsbm@math.bme.hu}
\and
HUN-REN-ELTE Numerical Analysis and Large Networks Research Group, Hungary}

\maketitle              

\begin{abstract}
The mathematical modeling of the propagation of illnesses has an important role from both mathematical and biological points of view. In this article, we observe an SEIR-type model with a general incidence rate and a non-constant recruitment rate function. First, we observe the qualitative properties of different methods: first-order and higher-order strong stability preserving Runge-Kutta methods \cite{shu}. We give different conditions under which the numerical schemes behave as expected. Then, the theoretical results are demonstrated by some numerical experiments.
\keywords{positivity preservation, general SEIR model, SSP Runge-Kutta methods}
\end{abstract}
\section{Introduction}
In recent years it turned out that the mathematical modeling of the propagation of illnesses is not only useful and important from a scientific point of view, but the outbreak of COVID-19 made the role of its real-life applications apparent: it showed the importance of adequate modeling of the illnesses not only from a clinical but also from an economical standpoint. 

One of the key tools to observe such phenomena is mathematical models: in this paper, we consider differential equations. These models can be used to observe the rate of the spread of the illness and can also give an approximation of the number of infected people. Kermack and McKendrick \cite{sir1927} used compartmental models to describe these phenomena. In this case, the members of the population are divided into compartments e.g. suspected or infected, and the members of the population can move between these compartments. In this paper we use the following common compartments:
\begin{itemize}
\item $S(t)$ is the number of susceptible people who are not yet infected at time $t$,
\item $E(t)$ is the number of exposed individuals, who have been infected but are not yet infectious at time $t$,
\item $I(t)$ is the number of infectious people who can spread infectious diseases at time $t$,
\item $R(t)$ is the number of removed or deceased individuals, who have been infected and then removed from the disease meaning that they can no longer infect others at time $t$.
\end{itemize}
In the last decades numerous authors expanded the models of Kermack and McKendrick \cite{matbio}, \cite{capasso}, \cite{sir_bev}. In recent years, the increasing rate of migration in Europe also influenced the rise of models incorporating the role of the movement of a considerable amount of people into the observed region \cite{migration2}, \cite{migration1}. In this paper, we also investigate an epidemic model which can be used to describe such phenomena. Throughout the article, we will assume that the recruitment rate function describing such migration (and also births in the system) is non-constant. 

In Section \ref{sec:cont_mod} we propose the continuous model and show that the solution of this differential equation attains solutions that behave in a biologically reasonable way (e.g. they are non-negative). Then, in Section \ref{sec:discrete} we observe appropriate numerical schemes and show that by using a sufficiently small time step, the schemes preserve the qualitative properties of the continuous model such as positivity, boundedness, and conservation laws. Later in Section \ref{sec:RK} similar theorems are also proved for higher-order methods. Finally, in Section \ref{sec:num} the previous results are demonstrated by some numerical experiments.

\section{Model formulation and preliminaries}\label{sec:cont_mod}
Let us consider the following generalization of the well-known SEIR type epidemics model:
\begin{equation} \label{seir}
\begin{aligned}
\frac{d S (t)}{d t} &= \Pi(t) - \mu S(t) - f(I(t))\cdot S(t), \\
\frac{d E(t)}{d t} &= f(I(t))\cdot S(t) - (\mu + \sigma)E(t), \\
\frac{d I(t)}{d t} &= \sigma E(t) - (\mu + \gamma)I(t) + \delta R(t), \\
\frac{d R(t)}{d t} &= \gamma I(t) - (\mu + \delta) R(t)
\end{aligned}
\end{equation}
$(t \geq 0)$ with the following initial conditions
\begin{align} \label{seir_initial}
S(0) = S_0 \geq 0, \quad E(0) = E_0 \geq 0, \quad I(0) = I_0 \geq 0, \quad R(0) = R_0 \geq 0. 
\end{align}
Moreover, let function $N(t)$ denote the total number of the population, meaning that
\begin{align}\label{eq:N_def}
N(t) = S(t) + E(t) + I(t) + R(t).
\end{align}
The assumptions of the model are the following:
\begin{itemize}
\item There is vital dynamics in this model: the function $\Pi(t)$ is the recruitment rate (incorporating the births and the migration of the model) and the natural deaths rate equals $\mu\geq 0$ in every compartment. Later we will assume that $0 \leq \Pi(t) \leq K$ ($K \in \mathbb{R}^+$) is a continuous function.
\item The incidence function has the form $f(I)\cdot S$, which function describes the way of transmission of the disease between susceptible and infected individuals in the population.
\item The parameter $\gamma$ denotes the recovery rate of the infected individuals.
\item The rate at which exposed individuals become infected is expressed by parameter $\sigma$.
\item The parameter $\delta$ represents the rate at which an individual from the recovered compartments comes to the infected class.
\end{itemize}
The function $f(I)$ describes the effect infected people have on healthy members of the population. In epidemiology, this function is called the force of infection, which expresses the rate at which susceptible individuals acquire an infectious disease \cite{matbio}. From now on, we assume that the function $f(x): \mathbb{R} \rightarrow \mathbb{R}$ has the following properties:
\begin{itemize}
    \item[(C1)] $f(x) \geq 0$ for $x \geq 0$,
    \item[(C2)] $f(0) = 0$,
    \item[(C3)] $f(x)$ is bounded in the sense that there exists a constant $\alpha \in \mathbb{R}^+$ such that $|f(x)|\leq \alpha |x|$ for every $x \in \mathbb{R}$,
    \item[(C4)] $f(x)$ is locally Lipschitz, meaning that every point $z \in \mathrm{Dom}(f)$ has a neighborhood $U_z \in \mathrm{Dom}(f)$ in which $f$ is Lipschitz continuous, i.e. for every $x,y \in U_z$, $|f(x)-f(y)|< L |x-y|$ holds for some constant $L>0$.
\end{itemize} 
Note that these assumptions hold for several different choices of function $f$. The simplest choice of the function $f$ when 
\begin{align}
f(x) =  x.  
\end{align}
Then the interaction between the suspected and infected people can be described by the function $I\cdot S$, which was used in the model \cite{sir1927}. The function of the force of infection can be described with nonlinear functions too. We assume that $f$ is a Holling--type function and has the form
\begin{align}
f(x) = \frac{c_1x}{1 + c_2x^k}
\end{align}
where the parameters show the effect of the infectious \cite{holling1}, \cite{holling2}. We can investigate the effect of the media with the following function \cite{mediaeffect}
\begin{align}
f(x) = \nu e^{-\eta x} x.
\end{align}

The attributes of problem \eqref{seir}--\eqref{seir_initial} are listed in the following theorem.
\begin{theorem} Let us assume that properties (C1)--(C4) hold for function $f$.
\begin{itemize}
\item[a)] The solution of the initial value problem \eqref{seir}--\eqref{seir_initial} uniquely exists for all $t \geq 0$.
\item[b)] For function $N(t)$ defined in \eqref{eq:N_def}, if $\mu>0$ we have \linebreak
$N(t) \leq N(0)+\dfrac{K}{\mu}$ and also
\begin{equation}
    \lim_{t \rightarrow \infty} \left|N(t) -       \dfrac{\Pi(t)}{\mu}\right|=0.
\end{equation}
If $\mu=0$, then $N(t)\leq N(0)+K t$ and if we also assume that $\displaystyle\int_0^{\infty} \Pi(s) ds = \mathcal{P}$, then $\displaystyle\lim_{t \rightarrow \infty} N(t) = N(0) + \mathcal{P}$. 
\item[c)] The solution of the problem \eqref{seir} is non--negative, that is if \linebreak $S(0), \ E(0), \ I(0), \ R(0) \geq0$, then $S(t), \ E(t), \ I(t), \ R(t) \geq 0$, for all $t > 0$.
\end{itemize}   
\end{theorem}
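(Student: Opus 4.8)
The plan is to establish the three claims in the order (a), (b), (c), and then to use the a priori bounds from (b)--(c) to upgrade the local existence in (a) to a global one. For (a) I would check that the right-hand side of \eqref{seir} is continuous in $t$ (since $\Pi$ is continuous) and locally Lipschitz in $(S,E,I,R)$: the only nonlinear term is $f(I)\,S$, which is locally Lipschitz in $(I,S)$ because $f$ is locally Lipschitz by (C4) and a product of locally bounded, locally Lipschitz functions is locally Lipschitz, while all the remaining terms are affine. The Picard--Lindel\"of theorem then yields a unique solution on a maximal interval $[0,T_{\max})$ with the usual blow-up alternative ($T_{\max}<\infty$ forces $\|(S,E,I,R)(t)\|\to\infty$ as $t\to T_{\max}$); I would postpone proving $T_{\max}=\infty$ to the very end.

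For (b) the key observation is that adding the four equations of \eqref{seir} cancels every coupling term and leaves, by \eqref{eq:N_def}, the scalar linear ODE $N'(t)=\Pi(t)-\mu N(t)$ on $[0,T_{\max})$. If $\mu>0$, variation of constants gives $N(t)=e^{-\mu t}N(0)+\int_0^t e^{-\mu(t-s)}\Pi(s)\,ds$, whence $0\le\Pi\le K$ yields $N(t)\le e^{-\mu t}N(0)+\tfrac{K}{\mu}(1-e^{-\mu t})\le N(0)+\tfrac{K}{\mu}$, and the limit relation follows by comparing the convolution $\int_0^t e^{-\mu(t-s)}\Pi(s)\,ds$ with $\Pi(t)/\mu=\int_0^\infty e^{-\mu u}\Pi(t)\,du$ as $t\to\infty$. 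If $\mu=0$, the equation is just $N'(t)=\Pi(t)$, so $N(t)=N(0)+\int_0^t\Pi(s)\,ds\le N(0)+Kt$, and under $\int_0^\infty\Pi=\mathcal P$ the limit is $N(0)+\mathcal P$.

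For (c) I would first prove the stronger statement that strictly positive initial data produce a solution that remains strictly positive on all of $[0,T_{\max})$, and then recover the case of merely non-negative data by approximating the initial vector by strictly positive ones and using continuous dependence on initial conditions over compact time intervals. In the strict case, reading the $S$-equation as the scalar linear ODE $S'+(\mu+f(I(t)))S=\Pi(t)$ with continuous coefficient $f(I(\cdot))$, its integrating-factor representation shows $S(t)>0$ for every $t$, using only $S_0>0$ and $\Pi\ge0$ and no sign information on $I$. Then I would run a minimal-exit-time argument: if $t^\star\in(0,T_{\max})$ were the first instant at which some component vanishes, then all four components are non-negative on $[0,t^\star]$, so the source terms $f(I)\,S$ (non-negative by (C1), since $I,S\ge0$ there), $\sigma E+\delta R$, and $\gamma I$ are non-negative on $[0,t^\star]$, and the integrating-factor representations of the $E$-, $I$- and $R$-equations force the supposedly vanishing component to be strictly positive at $t^\star$, a contradiction. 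Finally, once (c) holds on $[0,T_{\max})$ we have $0\le S,E,I,R\le N$ with $N$ bounded on every finite interval by (b), so the blow-up alternative is excluded and $T_{\max}=\infty$, completing (a).

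I expect the non-negativity in (c) to be the main obstacle: because (C1)--(C2) control the sign of $f$ only on $[0,\infty)$, the incidence term $f(I)\,S$ need not stay non-negative if $I$ dips below zero, so the four compartments cannot be decoupled and treated separately; the minimal-exit-time device, anchored by the fact that $S$ is positive ``for free'', is what breaks this circularity, and the bookkeeping of which component reaches zero first, together with the approximation step for non-strict data, is the delicate part. A smaller point worth checking is that the relation $\lim_{t\to\infty}|N(t)-\Pi(t)/\mu|=0$ seems to require some regularity of $\Pi$ beyond mere boundedness (for instance, that $\Pi$ has a limit at infinity); without it only a bound on $\limsup_{t\to\infty}|N(t)-\Pi(t)/\mu|$ appears to be available.
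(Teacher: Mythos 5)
Your proposal is correct, and for parts a) and b) it follows essentially the paper's route: local Lipschitz continuity of the right-hand side via (C3)--(C4) (the paper checks this term by term in the $1$-norm), Picard--Lindel\"of with the global statement deferred, and the scalar equation $N'(t)=\Pi(t)-\mu N(t)$ solved by variation of constants to get $N(t)\leq N(0)+\tfrac{K}{\mu}$, respectively $N(t)\leq N(0)+Kt$ when $\mu=0$. The genuine difference is in part c): the paper simply verifies the quasi-positivity condition (each right-hand side is non-negative on the face where its own component vanishes and the others are non-negative) and invokes Lemma~1 of the cited Horv\'ath paper, whereas you give a self-contained argument --- strict positivity of $S$ by an integrating factor with no sign assumption on $I$, a first-exit-time contradiction for $E$, $I$, $R$ using (C1), and an approximation/continuous-dependence step to pass from strictly positive to merely non-negative data. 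Your route is longer but elementary and makes explicit exactly where (C1) enters and why the circularity ($f(I)S$ could be negative if $I$ dipped below $0$) is broken; the paper's route is shorter at the cost of outsourcing precisely this point to an external lemma. Both then close part a) identically, using the bound on $N$ and non-negativity to exclude finite-time blow-up. Finally, your closing caveat about the relation $\lim_{t\to\infty}\bigl|N(t)-\Pi(t)/\mu\bigr|=0$ is well taken rather than a defect of your write-up: the paper proves it by applying l'Hospital's rule to $\int_0^t\Pi(s)e^{\mu s}\,ds\big/e^{\mu t}$, which tacitly requires the limit of the derivative quotient $\Pi(t)/\mu$ to exist, i.e.\ some convergence or regularity of $\Pi$ beyond $0\leq\Pi\leq K$; for a merely bounded oscillatory $\Pi$ (compare the paper's own discrete counterexample in the remark after Theorem~2) only a bound on the $\limsup$ is available, exactly as you suspect.
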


\begin{proof}
\begin{itemize}  
\item[a)] For the existence of unique solutions, it is enough to prove that the right-hand side of the equation has the local Lipschitz property in its second variable and it is continuous. 
Let us write equation \eqref{seir} in the more simple form
\begin{align}
   u'(t)=F(t,u(t)),
\end{align}
in which $u(t)=(S(t),E(t),I(t),R(t))$ and the function $F:\mathbb{R}\times \mathbb{R}^4 \rightarrow \mathbb{R}^4$ is defined as 
\begin{align}
   F(t,S(t),E(t),I(t),R(t))=\left( \begin{array}{c}
       \Pi(t) - \mu S(t) - f(I(t))\cdot S(t) \\
      f(I(t))\cdot S(t) - (\mu + \sigma)E(t) \\
       \sigma E(t) - (\mu + \gamma)I(t) + \delta R(t) \\
       \gamma I(t) - (\mu + \delta) R(t)
   \end{array} \right).
\end{align}
Since $\Pi(t)$ is continuous, we only have to verify the local Lipschitz continuity (where we take the 1-norm of $F$):
   \begin{equation}\label{eq:F_ineq}
      \begin{aligned}
       \Vert F(t,u_1)-F(t,u_2)\Vert = \Vert - \mu S_1 - f(I_1)\cdot S_1  + \mu S_2 + f(I_2)\cdot S_2 \Vert + \\
       +\Vert f(I_1)\cdot S_1 - (\mu + \sigma)E_1 - f(I_2)\cdot S_2 + (\mu + \sigma)E_2 \Vert + \\
       +\Vert \sigma E_1 - (\mu + \gamma)I_1 + \delta R_1 - \sigma E_2 + (\mu + \gamma)I_2 - \delta R_2 \Vert +\\
       +\Vert \gamma I_1 - (\mu + \delta) R_1 - \gamma I_2 + (\mu + \delta) R_2 \Vert
   \end{aligned} 
   \end{equation}    
   For the first term of \eqref{eq:F_ineq}, we get
   \begin{equation}
   \begin{aligned}
       \Vert - \mu S_1 - f(I_1)\cdot S_1 + \mu S_2 + f(I_2)\cdot S_2 \Vert \leq \\
       \leq \mu \Vert S_1 - S_2 \Vert + \Vert f(I_1)\cdot S_1 - f(I_1)\cdot S_2 + f(I_1\cdot S_2 - f(I_2)\cdot S_2\Vert \leq\\
       \leq \mu \Vert S_1 - S_2 \Vert +\Vert f(I_1) \Vert \Vert S_1 - S_2 \Vert + \Vert f(I_1) - f(I_2)\Vert \Vert S_2\Vert \leq
       \end{aligned} 
   \end{equation}
   Now we use conditions (C3) and (C4):
\begin{equation}
       \leq \mu \Vert S_1 - S_2 \Vert +\alpha\Vert I_1 \Vert \Vert S_1 - S_2 \Vert + L\Vert I_1 - I_2\Vert \Vert S_2\Vert \leq
   \end{equation}
   Since we consider the local Lipschitz property, we can assume that $\Vert I_1 \Vert \leq M $ and $\Vert S_2 \Vert \leq M $ for some constant $M>0$, meaning that
   \begin{equation}
       \leq \mu \Vert S_1 - S_2 \Vert +\alpha M \Vert S_1 - S_2 \Vert + L M \Vert I_1 - I_2\Vert. 
    \end{equation}
   The second term of \eqref{eq:F_ineq} can be transformed similarly as the first one, and in the end we get
    \begin{equation}
   \begin{aligned}
       \Vert f(I_1)\cdot S_1 - (\mu + \sigma)E_1 - f(I_2)\cdot S_2 + (\mu + \sigma)E_2 \Vert \leq \\
       \leq (\mu + \sigma)\Vert E_1 - E_2 \Vert + \alpha M \Vert S_1 - S_2 \Vert + L M \Vert I_1 - I_2\Vert. 
   \end{aligned}
   \end{equation}
   Similarly, for the third term we have
   \begin{equation}
   \begin{aligned}
       \Vert \sigma E_1 - (\mu + \gamma)I_1 + \delta R_1 - \sigma E_2 + (\mu + \gamma)I_2 - \delta R_2 \Vert \leq \\
       \leq \sigma \Vert E_1 - E_2\Vert + (\mu + \gamma) \Vert I_1 - I_2\Vert + \delta \Vert R_1 - R_2 \Vert .
    \end{aligned}
    \end{equation}
   The last, fourth term of \eqref{eq:F_ineq} can also be bounded from above:
   \begin{equation}
   \begin{aligned}        
   \Vert \gamma I_1 - (\mu + \delta) R_1 - \gamma I_2 + (\mu + \delta) R_2 \Vert \leq \\
       \leq \gamma \Vert I_1 - I_2 \Vert + (\mu + \delta) \Vert R_1 - R_2 \Vert.
   \end{aligned}
   \end{equation}
Then, it is clear that the local Lipschitz property will hold.

The global existence will be proved after part c).

\item[b)] If we add up the equations of \eqref{seir}, we get
\begin{align}
N'(t) = \Pi(t) - \mu N(t),
\end{align}
which has a solution in the form
\begin{equation}\label{eq:Nform}
  N(t) = N(0)\cdot e^{-\mu t} + \int_0^t\Pi(s) \cdot e^{-\mu(t-s)} \ ds.  
\end{equation}
Then,
\begin{equation}
  N(t) \leq N(0) + e^{-\mu t}\int_0^t\Pi(s) \cdot e^{\mu s} \ ds \leq N(0) + e^{-\mu t} K \int_0^t  e^{\mu s} \ ds =  
\end{equation}
\begin{equation}
    = N(0) + e^{-\mu t} K \left( \dfrac{e^{\mu t}}{\mu} - \dfrac{1}{\mu} \right) \leq N(0) + \dfrac{K}{\mu}.
\end{equation}
For the second claim, we have
\begin{equation}
    \lim_{t \rightarrow \infty} \left| N(t) - \dfrac{\Pi(t)}{\mu} \right| \leq \lim_{t \rightarrow \infty} \left| N(0)\cdot e^{-\mu t} \right|+ \lim_{t \rightarrow \infty} \left|\int_0^t\Pi(s) \cdot e^{-\mu(t-s)} \ ds - \dfrac{\Pi(t)}{\mu} \right| =  
\end{equation}
\begin{equation}
    =\lim_{t \rightarrow \infty} \left|\dfrac{\displaystyle\int_0^t\Pi(s) \cdot e^{\mu s} \ ds }{e^{\mu t}} - \dfrac{\Pi(t)}{\mu} \right|.
\end{equation}

If $\displaystyle\lim_{t \rightarrow \infty}\int_0^t\Pi(s) \cdot e^{\mu s} \ ds<\infty$, then the first term goes to zero. Moreover, \linebreak$\displaystyle\lim_{t \rightarrow \infty}\int_0^t\Pi(s) \cdot e^{\mu s} \ ds<\infty$ can only hold if $\displaystyle \lim_{t \rightarrow \infty}\Pi(t) = 0$, which gives the statement. 

On the other hand, if $\displaystyle\lim_{t \rightarrow \infty}\int_0^t\Pi(s) \cdot e^{\mu s} \ ds=\infty$, then we can use the rule of l'Hospital in the case of the first term:
\begin{equation}
    \lim_{t \rightarrow \infty} \left|\dfrac{\displaystyle\int_0^t\Pi(s) \cdot e^{\mu s} \ ds }{e^{\mu t}} - \dfrac{\Pi(t)}{\mu} \right| =  \lim_{t \rightarrow \infty} \left|\dfrac{\Pi(t) e^{\mu t}}{\mu e^{\mu t}} - \dfrac{\Pi(t)}{\mu} \right|=0.
\end{equation}

If $\mu=0$, then the previous arguments reduce to
\begin{equation}
  N(t) = N(0) + \int_0^t\Pi(s) \ ds \leq N(0) +  K t   
\end{equation}
and
\begin{equation}
\lim_{t \rightarrow \infty} N(t) = N(0) + \lim_{t \rightarrow \infty}\int_0^t\Pi(s)  ds = N(0) + \mathcal{P}. 
\end{equation}
where we used the fact that $\displaystyle\lim_{t \rightarrow \infty}\int_0^t\Pi(s)  ds = \mathcal{P}$. Note that in this case $\mathcal{P}$ can also be infinite.

\item[c)]
From the system \eqref{seir} we get the following
\begin{equation}
\begin{aligned}
\left.\frac{d S}{d t} \right|_{S = 0} &= \Pi(t), \\
\left.\frac{d E}{d t} \right|_{E = 0} &= f(I)\cdot S, \\
\left.\frac{d I}{d t} \right|_{I = 0} &= \sigma E + \delta R, \\
\left.\frac{d R}{d t} \right|_{R = 0} &= \gamma I.
\end{aligned} 
\end{equation}
Then, since all the right-hand sides of the equations are non-negative for non-negative $S, E, I$ and $R$ functions, we get  $S(t), E(t), I (t), R(t) \geq 0$ for $t > 0$ whenever $ S(0), E(0), I (0), R(0) \geq 0$ by using Lemma 1 in \cite{horvath}. 

\end{itemize}

\item[a)] (Cont'd) By part c) it is easy to see that the solutions cannot tend to infinity at a finite value of $t$, thus the global solution also exists.

\end{proof}

In the next sections, we observe how different numerical schemes applied to the model \eqref{seir} behave, meaning whether they preserve the properties of the contionuos model or not.

\section{Discrete models and their properties}\label{sec:discrete}
Since the analytic solution of model \eqref{seir} is not known, we are going to approximate these solutions by a numerical scheme.

The application of the explicit Euler method to equation \eqref{seir} with timestep $\Delta t$ results in the scheme 
\begin{equation} \label{sier_euler}
\begin{aligned}
\frac{S^{n+1} - S^n}{\Delta t} &= \Pi^n - \mu S^n - f(I^n)\cdot S^n, \\
\frac{E^{n+1} - E^n}{\Delta t} &=f(I^n)\cdot S^n - (\mu + \sigma)E^n, \\
\frac{I^{n+1} - I^n}{\Delta t} &=\sigma E^n - (\mu + \gamma)I^n + \delta R^n, \\
\frac{R^{n+1} - R^n}{\Delta t} &=\gamma I^n - (\mu + \delta) R^n
\end{aligned}
\end{equation}
with the initial conditions $S^0 = S(0)$, $E^0 = E(0)$,  $I^0 = I(0)$, $R^0 = R(0) \geq 0$ for all $n \in \mathbb{N}$.
\begin{theorem}\label{th:EE}
Assume that we apply scheme \eqref{sier_euler} to solve system \eqref{seir}. Then, the following statements hold:
\begin{itemize}
\item[a)] If
\begin{align}
\Delta t \leq \Delta t^\star = \min \left\lbrace \frac{1}{\mu + f(I^n)}, \frac{1}{\mu + \sigma}, \frac{1}{\mu + \gamma}, \frac{1}{\mu + \delta} \right\rbrace, 
\end{align}
holds for every $n=0,1,\dots$, then the explicit Euler method applied to the problem \eqref{seir} preserves the non-negativity property.
\item[b)] Let $N^k = S^k + E^k + I^k + R^k$ ($k=0,1,\dots$). If $\mu>0$ and $\Delta t \leq \dfrac{1}{\mu}$, then the estimate
\begin{equation}\label{eq:Nbound}
    N^k \leq \mathcal{N}:=N^0 +  \frac{K}{\mu}
\end{equation}
also holds for every value of $k=0,1,\dots$. Moreover, if $\Pi^k \rightarrow P$ for some $P\in \mathbb{R}$, then $\displaystyle\lim_{k \rightarrow \infty}N^k = \dfrac{P}{\mu}$. 

If $\mu=0$, then we get $N^k \leq N^0 + k \Delta t K $ and if 
\begin{equation}
    \sum_{k=0}^{\infty} \Pi^k = \widetilde{\mathcal{P}} \in \mathbb{R},
\end{equation}
then $N^k \rightarrow N^0 + \Delta t \widetilde{\mathcal{P}}$ as $k \rightarrow \infty$.
\end{itemize}
\end{theorem}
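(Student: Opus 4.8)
The plan is to handle the two parts separately; in each case the key move is to rewrite the explicit Euler update as a scalar affine recursion of the shape (new value)~$=$~(non-negative coefficient)~$\cdot$~(old value)~$+$~(non-negative forcing), and then to argue by induction on $n$.

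For part a), I would write the lines of \eqref{sier_euler} in explicit update form: $S^{n+1} = \bigl(1-\Delta t(\mu+f(I^n))\bigr)S^n + \Delta t\,\Pi^n$, $E^{n+1} = (1-\Delta t(\mu+\sigma))E^n + \Delta t\,f(I^n)S^n$, $I^{n+1} = (1-\Delta t(\mu+\gamma))I^n + \Delta t(\sigma E^n + \delta R^n)$, and $R^{n+1} = (1-\Delta t(\mu+\delta))R^n + \Delta t\,\gamma I^n$. Assuming inductively that $S^n,E^n,I^n,R^n\ge 0$, condition (C1) gives $f(I^n)\ge 0$, so each additive forcing term is non-negative, while the hypothesis $\Delta t\le\Delta t^\star$ makes every multiplicative factor $1-\Delta t(\mu+\cdot)$ non-negative as well; hence $S^{n+1},E^{n+1},I^{n+1},R^{n+1}$ are non-negative combinations of non-negative numbers, closing the induction. (The coefficient multiplying $S^n$ depends on $I^n$, which is precisely why $\Delta t\le\Delta t^\star$ is imposed for every $n$, not merely for $n=0$.)

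For part b), summing the four equations of \eqref{sier_euler} cancels the incidence and transfer terms and leaves the scalar recursion $N^{n+1} = (1-\mu\Delta t)N^n + \Delta t\,\Pi^n$. Set $q:=1-\mu\Delta t$; when $\mu>0$ and $\Delta t\le 1/\mu$ one has $q\in[0,1)$. Unrolling gives $N^k = q^k N^0 + \Delta t\sum_{j=0}^{k-1} q^{\,k-1-j}\Pi^j$, and using $0\le\Pi^j\le K$ together with $\sum_{j=0}^{k-1} q^{\,k-1-j} = (1-q^k)/(\mu\Delta t)$ yields $N^k \le q^k N^0 + \tfrac{K}{\mu}(1-q^k)\le N^0 + \tfrac{K}{\mu}$, since $q^k\le 1$ and $N^0\ge 0$; this is \eqref{eq:Nbound}. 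For the limit, let $a_k:=N^k-P/\mu$ and $\varepsilon_k:=\Delta t(\Pi^k-P)$; noting that $P/\mu$ is the fixed point of the recursion one gets $a_{k+1}=q a_k+\varepsilon_k$, and since $|q|<1$ and $\varepsilon_k\to 0$, a standard splitting of $\sum_j q^{\,k-1-j}\varepsilon_j$ at an index beyond which $|\varepsilon_j|$ is small shows $a_k\to 0$, i.e. $N^k\to P/\mu$. The case $\mu=0$ is immediate: the recursion becomes $N^{n+1}=N^n+\Delta t\,\Pi^n$, so $N^k = N^0 + \Delta t\sum_{j=0}^{k-1}\Pi^j \le N^0 + k\Delta t K$, and convergence of $\sum_k\Pi^k$ to $\widetilde{\mathcal{P}}$ gives $N^k\to N^0 + \Delta t\,\widetilde{\mathcal{P}}$.

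The algebraic steps — the geometric-sum identity and the inductive bookkeeping — are routine. The one place that needs genuine care is the limit $N^k\to P/\mu$: convergence of the perturbed linear recursion cannot be obtained just by passing to the limit in the fixed-point equation (that would presuppose convergence), so it has to be argued via the $\varepsilon$-splitting above, ruling out the possibility that $N^k$ oscillates. I would also remark that the borderline choice $\Delta t=1/\mu$ (for which $q=0$) requires no separate treatment, as it is covered by $q\in[0,1)$ throughout.
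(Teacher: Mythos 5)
Your proof is correct, and for part a) and the uniform bound in part b) it follows essentially the same route as the paper: rewrite each Euler update as a non-negative-coefficient affine recursion and induct, then unroll $N^{n+1}=(1-\mu\Delta t)N^n+\Delta t\,\Pi^n$ and use $0\le\Pi^j\le K$ with the geometric sum. Where you genuinely diverge is the limit $N^k\to P/\mu$: the paper argues by sandwiching $N^n$ between two auxiliary sequences $\underline{N}^n$ and $\overline{N}^n$ driven by the constant forcings $P-\varepsilon$ and $P+\varepsilon$, and invokes Banach's fixed point theorem (the map $x\mapsto(1-\mu\Delta t)x+\Delta t(P\mp\varepsilon)$ is a contraction) to identify their limits $(P\mp\varepsilon)/\mu$, whereas you subtract the fixed point, obtain $a_{k+1}=qa_k+\varepsilon_k$ with $q=1-\mu\Delta t\in[0,1)$ and $\varepsilon_k=\Delta t(\Pi^k-P)\to 0$, and conclude via the explicit solution and a tail-splitting of $\sum_j q^{k-1-j}\varepsilon_j$. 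Both arguments rely on the same structural facts ($q\ge 0$ from $\Delta t\le 1/\mu$, $q<1$ from $\mu\Delta t>0$); yours is more elementary and self-contained, avoids the comparison sequences and the fixed-point theorem, and gives the error decomposition (geometric decay of the initial error plus a small tail) essentially for free, while the paper's sandwich argument avoids explicit summation and reuses verbatim the fixed-point template it later applies to the Runge--Kutta case in Theorem~\ref{th:RK}. Your closing remarks (that $\Delta t=1/\mu$ needs no special treatment, and that the limit cannot be obtained merely by passing to the limit in the fixed-point equation) are accurate, and your $\mu=0$ case matches the paper's.
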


\begin{proof}\mbox{}\\*
\begin{itemize}
\item[a)] The first equation in the system \eqref{sier_euler} can be written as
\begin{equation}
S^{n+1} = S^n + \Delta t \left( \Pi^n - \mu S^n - f(I^n)\cdot S^n \right )  = S^n (1 - \Delta t\cdot\mu - \Delta t \cdot f(I^n)) + \Delta t \cdot \Pi^n.
\end{equation}
That means $S^{n+1}$ is non-negative if the condition 
\begin{align}
1 - \Delta t\cdot\mu - \Delta t \cdot f(I^n) \geq 0
\end{align}
is satisfied and we get the following upper bound for the step-size
\begin{align}
\Delta t \leq \frac{1}{\mu + f(I^n)}.
\end{align}
Similarly, by using the second equation in the system \eqref{sier_euler}
\begin{align}
E^{n+1} = E^n + \Delta t(f(I^n)\cdot S^n - (\mu + \sigma)E^n) = \Delta t \cdot f(I^n) \cdot S^n + E^n(1 - \Delta t (\mu + \sigma))
\end{align}
which is non-negative if the condition
\begin{align}
\Delta t \leq \frac{1}{\mu + \sigma}    
\end{align}
holds. From the third equation 
\begin{align}
I^{n+1} = I^n + \Delta t(\sigma E^n - (\mu + \gamma)I^n + \delta R^n) = \Delta t ((\sigma E^n + \delta R^n) + I^n(1 - \Delta t(\mu + \gamma))
\end{align}
we get the following condition for the non-negativity property
\begin{align}
\Delta t \leq \frac{1}{\mu + \gamma}.
\end{align}
Similarly from the last equation, the condition for the non-negativity preservation has the form
\begin{align}
\Delta t \leq \frac{1}{\mu + \delta}.
\end{align}

\item[b)] Let us assume that $\mu>0$. By adding up the equations of the system \eqref{sier_euler}, the sum of the four compartments has the form
\begin{equation}
\begin{aligned}
N^{n+1} &= S^{n+1} + E^{n+1} + I^{n+1} + R^{n+1} = S^n + E^n + I^n + R^n+ \\
&+ \Delta t(\Pi^n - \mu (S^n + E^n + I^n + R^n)) = N^n (1 - \Delta t \cdot \mu) + \Delta t \cdot \Pi^n.
\end{aligned}
\end{equation}
Similarly we can express $N^{n+1}$ by using $N^{n-1}$, $N^{n-2}$ and so on:
\begin{equation}
\begin{aligned}
N^{n+1} &= N^{n-1}(1 - \Delta t \cdot \mu)^2 + \Delta t \cdot \Pi^{n-1} (1 - \Delta t \cdot \mu) + \Delta t \cdot \Pi^n =\\
&= N^{n-2}(1 - \Delta t \cdot \mu)^3 + \Delta t \cdot ((1 - \Delta t \cdot \mu)^2\Pi^{n-2} + (1 - \Delta t \cdot \mu)\Pi^{n-1} + \Pi^n) =\\
&= \ldots = N^{0}(1 - \Delta t \cdot \mu)^{n+1} + \Delta t \cdot ((1 - \Delta t \cdot \mu)^n\Pi^0 + (1 - \Delta t \cdot \mu)^{n-1}\Pi^1 + \ldots + \Pi^n) \leq \\
& \leq  N^{0}(1 - \Delta t \cdot \mu)^{n+1} + \Delta t K \sum_{k = 0}^{n}(1 - \Delta t \cdot \mu)^{k}  \leq\\
&\leq N^0 + \Delta t \cdot K \cdot \frac{1-(1 - \Delta t \cdot \mu)^n}{\Delta t \cdot \mu} \leq N^0 +  \frac{K}{\mu},
\end{aligned}
\end{equation}
where we used that $\Delta t \leq \dfrac{1}{\mu}$ and $\Pi^j \leq K$ for every index $j$. 

Now we show the second claim. Since $\Pi^k \rightarrow P$ as $k \rightarrow \infty$, then for an arbitrary fixed $\varepsilon >0$ there exists an index $N_1(\varepsilon) \in \mathbb{N}$ for which for every $k>N_1(\varepsilon)$ we have $P-\varepsilon<\Pi^k<P+\varepsilon$. Now let us consider the following sequence: let us assume that $\underline{N}^n = N^n$ for $n\leq N_1(\varepsilon)$ and for $n>N_1(\varepsilon)$ we have
$$ \underline{N}^{n+1} = (1-\Delta t \mu) \underline{N}^{n} + \Delta t (P - \varepsilon). $$
It is clear that $N^n \geq \underline{N}^n$. Moreover, the sequence $\underline{N}^n$ converges: the reason for this is that the function
$$ h_1(x) = (1-\Delta t \mu) x + \Delta t (P - \varepsilon) $$
maps $\left[0, \; N(0) + \dfrac{K}{\mu}\right]$ onto itself (by the first part of part b) of this proof), and it is also a contraction on that set. Therefore, by the Banach fixed point theorem the sequence $\underline{N}^n$ converges to the fixed point of function $h_1$, which is $\dfrac{P-\varepsilon}{\mu}$. Then, the $\underline{N}^n \rightarrow\dfrac{P-\varepsilon}{\mu}$ convergence means that for the previously defined $\varepsilon>0$ there exists an index $N_2(\varepsilon)$ for which $\underline{N}^n > \dfrac{P}{\mu} - \varepsilon$ holds for every $n>N_2(\varepsilon)$. Thus, $N^n > \underline{N}^n > \dfrac{P}{\mu} - \varepsilon$ also holds for $n>\max\{ N_1(\varepsilon),N_2(\varepsilon)\}$.

Similarly, if we consider the sequence which is $\overline{N}^n = N^n$ for $n\leq N_1(\varepsilon)$ and for $n > N_1(\varepsilon)$ we have
$$ \overline{N}^{n+1} = (1-\Delta t \mu) \overline{N}^{n} + \Delta t (P + \varepsilon), $$
then $N^n \leq \overline{N}^n$ holds, and by arguments similar to the previous one, we get that it converges to $\dfrac{P+\varepsilon}{\mu}$. Then, there exists an $N_3(\varepsilon)$ for which $\overline{N}^n \leq \dfrac{P}{\mu} + \varepsilon$ holds for every $n>N_3(\varepsilon)$, meaning that $N^n < \overline{N}^n < \dfrac{P}{\mu} + \varepsilon$ also holds for $n>\max\{ N_1(\varepsilon),N_3(\varepsilon)\}$.

In conclusion, for an arbitrary $\varepsilon>0$ we showed that for \linebreak $n>\max\{ N_1(\varepsilon), N_2(\varepsilon), N_3(\varepsilon)\}$ we have $\dfrac{P}{\mu} - \varepsilon \leq N^n \leq \dfrac{P}{\mu} + \varepsilon$, which by definition means that $N^n \rightarrow \dfrac{P}{\mu}$.

In the case $\mu=0$, the previous arguments reduce to the following:
\begin{equation}
    N^{n} = N^0 + \Delta t \sum_{k=0}^{n-1} \Pi^k \leq N^0 + K n \Delta t,
\end{equation}
and if $\displaystyle\sum_{k=0}^{\infty} \Pi^n = \widetilde{\mathcal{P}}<\infty$, then
\begin{align}
\lim_{n \rightarrow \infty} N^{n} = N^0  + \lim_{n \rightarrow \infty} \Delta
t \sum_{k=0}^{n} \Pi^k = N^0  +  \Delta
t \widetilde{\mathcal{P}}.
\end{align}
Similarly, if $\displaystyle\sum_{k=0}^{\infty} \Pi^n = \widetilde{\mathcal{P}}=\infty$, then $\displaystyle\lim_{n \rightarrow \infty} N^{n}=\infty$.

\end{itemize}
\end{proof}

\begin{remark}
If $\Delta t \in (0,\Delta t^*]$, then the condition of part (b) is satisfied. 
\end{remark}

\begin{remark}
If the conditions of part (a) and (b) of Theorem \ref{th:EE} are fulfilled, then the bound \eqref{eq:Nbound} holds for $S^n$, $I^n$ and $R^n$ for every $n=0,1,2, \dots$. Therefore, if property (C4) holds for function $f$, then condition $\Delta t \leq \Delta t^*$ is satisfied if
\begin{equation}
    \Delta t \leq \min \left\lbrace \frac{1}{\mu + B}, \frac{1}{\mu + \sigma}, \frac{1}{\mu + \gamma}, \frac{1}{\mu + \delta} \right\rbrace, 
\end{equation}
where $\displaystyle B = \sup_{x \in [0,\mathcal{N}]} f(x)$. Note that this condition can be checked a priori.
\end{remark}

\begin{remark}
    Note that in the continuous case, we had $\displaystyle\lim_{t \rightarrow \infty}\left|N(t) - \dfrac{\Pi(t)}{\mu}\right|=0$, but in the discrete case we only showed this property when $\displaystyle\lim_{t \rightarrow \infty} \Pi(t)=P$. In the general case, the claim $ \displaystyle\lim_{n \rightarrow \infty} \left| N^n - \dfrac{\Pi^n}{\mu} \right|\rightarrow 0$ might not hold. 

    For example, consider the previous numerical model with $\mu=1$, $\Delta t = 1/2$, $N^0=2$ and $\Pi(t)=-\cos(2 \pi t) + 1$. Then, the numerical method has the form
    \begin{equation}
        N^{n+2} = \dfrac{1}{2} N^{n+1} + \dfrac{1}{2} \Pi^{n+1} = \dfrac{1}{2} \left( \dfrac{1}{2} N^{n} + \dfrac{1}{2} \Pi^{n} \right)  + \dfrac{1}{2} \Pi^{n+1}= \dfrac{1}{4} N^n + \dfrac{1}{4} \Pi^n + \dfrac{1}{2} \Pi^{n+1}.
    \end{equation}
\end{remark}
From the construction of $\Pi(t)$ and since $N^0=2$, it is clear that if $n$ is even (and $n\geq 1$), then $\Pi^n=0$ and $\Pi^{n+1}=2$, meaning that
\begin{equation}
    N^{n+2} = \dfrac{1}{4} N^n +1. 
\end{equation}
By Banach's fixed point theorem this converges to the value $N=\dfrac{4}{3}$. Moreover, it can also be shown that for odd values of $n$ this value is $N=\dfrac{2}{3}$. This means that the difference $\left| N^n - \dfrac{\Pi^n}{\mu} \right|$ will not converge to zero, which can be seen in Figure \ref{fig:counterex}. Thus, the convergence does not hold in the most general setting.
\begin{figure}[!h]
\centering
\includegraphics[scale=0.5]{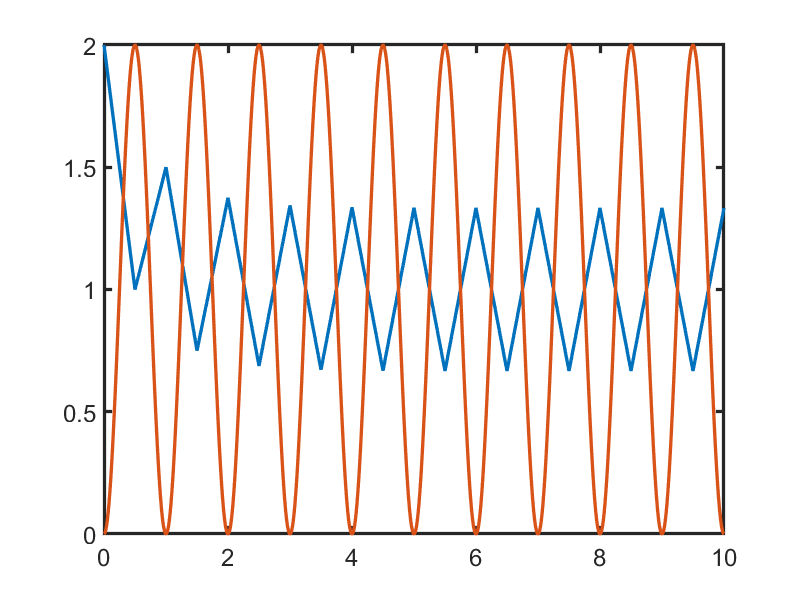}
\caption{The function $\Pi(t)$ (orange) and the numerical solution produced by the explicit Euler method for $N(t)$ (blue). As we can see, the difference of these functions does not converge to zero.}
\label{fig:counterex}
\end{figure}

\subsection{Higher order methods: strong stability preserving Runge-Kutta methods}\label{sec:RK}
Since the Euler method is only first order accurate (see Section \ref{sec:num}), we would like to apply higher-order methods to our system \eqref{seir}.

Let us consider a Runge-Kutta method in the Butcher-form \cite{butcher} with coefficients $a_{i,j}$ and $b_j$ ($i,j=1,\dots, m$). Let us also use the notations $A$ and $\boldsymbol{b}$ for the matrix and the vector containing the previous coefficients, respectively. Also, define the matrix $\mathcal{K} \in \mathbb{R}^{m \times m}$ as
\begin{align}
    \mathcal{K} = \left[ \begin{array}{cc}
        A & 0 \\
        \boldsymbol{b}^T & 0
    \end{array} \right].
\end{align}
Also, let us use the notation $\mathcal{I}$ for the $(m+1)$-dimensional identity matrix. Then, if there is such a value of $r>0$ for which $(I+r\mathcal{K})$ is invertible, then the Runge-Kutta method applied to equation
\begin{align}
    U'(t)=F(U(t))
\end{align}
can be expressed in the canonical Shu-Osher form \cite{shu1}, \cite{shu2}, \cite{shu3}
\begin{align}\label{Shu-OsherRK}
	\begin{split}
	U_{n-1}^{(i)} &= v_i U^{n-1}+\sum_{j=1}^m\alpha_{ij}\left(U_{n-1}^{(j)}+\frac{\tau}{r}F\left(U_{n-1}^{(j)}\right)\right),
    \qquad 1 \le i \le m+1, \\
    U^n &= U_{n-1}^{(m+1)},
	\end{split}
\end{align}
where the coefficients $\alpha_{ij}$ and $v_{i}$ are non-negative. These methods are usually called strong-stability preserving (or SSP) Runge-Kutta methods. 
Let us use the notations $\boldsymbol{\alpha}_r$ and $\boldsymbol{v}_r$ for the matrix and the vector collecting the aforementioned coefficients, where we emphasized the dependence on the parameter of $r$ (since different choices of $r$ might result in different schemes). 
The Shu-Osher representation with the largest value of $r$ for which $(I+r\mathcal{K})^{-1}$ exists is called optimal and the corresponding SSP coefficient is defined as
\begin{align}
	\mathcal{C} = \max\left\lbrace r \geq 0 \;|\; \exists \; (I+r\mathcal{K})^{-1} \text{ and }
		\boldsymbol{\alpha}_r \geq 0, \boldsymbol{v}_r \geq 0 \right\rbrace.
\end{align}
Then, an explicit Runge-Kutta method given in the optimal Shu-Osher form applied to equation \eqref{seir} has the form
\begin{subequations}\label{eq:SSPRK}
		\begin{align}
			S_{n-1}^{(i)} &= v_i S^{n-1} + \sum_{j=1}^{i-1} \alpha_{ij}\left(S_{n-1}^{(j)} +
			\frac{\tau}{\mathcal{C}}\left(\Pi_{n-1}^{(j)} - \mu S_{n-1}^{(j)} -f(I_{n-1}^{(j)}) \cdot S_{n-1}^{(j)}\right)\right), \label{ith_stageS}\\
            E_{n-1}^{(i)} &= v_i E^{n-1} + \sum_{j=1}^{i-1} \alpha_{ij}\left(E_{n-1}^{(j)} +
			\frac{\tau}{\mathcal{C}}\left(f(I_{n-1}^{(j)}) \cdot S_{n-1}^{(j)} - (\mu + \sigma) E_{n-1}^{(j)}\right)\right), \label{ith_stageE}\\
			I_{n-1}^{(i)} &= v_i I^{n-1} + \sum_{j=1}^{i-1} \alpha_{ij}\left(I_{n-1}^{(j)} +
			\frac{\tau}{\mathcal{C}}\left(\sigma E_{n-1}^{(j)} - (\mu + \gamma)I_{n-1}^{(j)} + \delta R_{n-1}^{(j)}\right)\right),\label{ith_stageI}\\
			R_{n-1}^{(i)} &= v_i R^{n-1} + \sum_{j=1}^{i-1} \alpha_{ij}\left(R_{n-1}^{(j)} +
			\frac{\tau}{\mathcal{C}}\left(\gamma I_{n-1}^{(j)} - (\mu+\delta) R_{n-1}^{(j)}\right)\right), \label{ith_stageR}\\
            S^{n} &= S_{n-1}^{(m+1)}, \\
            E^{n} &= E_{n-1}^{(m+1)}, \\
            I^{n} &= I_{n-1}^{(m+1)}, \\
            R^{n} &= R_{n-1}^{(m+1)}, 
		\end{align}
	\end{subequations}
where $1 \le i \le m+1$.

The next theorem gives a sufficient condition for the timesteps under which the non-negativity property holds.

\begin{theorem}\label{th:RK}
    Let us apply an explicit Runge-Kutta scheme given in the form \eqref{eq:SSPRK}. 
    \begin{itemize}
        \item[a)] If the time-step satisfies
    \begin{align}\label{eq:RK_cond}
        0 < \tau \leq \mathcal{C} \Delta t^*
    \end{align}
    (where $\Delta t^*$ is the same as defined in Theorem \ref{th:EE}), then the non-negativity property holds.
        \item[b)] Let $N^k = S^k +E^k+ I^k + R^k$ ($k=0,1,\dots$). If $\mu>0$ and $\tau \leq \mathcal{C} \dfrac{1}{\mu}$, then the estimate
\begin{equation}\label{eq:Nbound_RK}
    N^k \leq \mathcal{N}:=N^0 +  \frac{K}{\mu}
\end{equation}
also holds for every value of $k=0,1,\dots$. Moreover, if $\displaystyle\lim_{k\rightarrow \infty}\Pi^k = P$, then $N^k \rightarrow \dfrac{P}{\mu}$ as $k \rightarrow \infty$.

Additionally, in the case $\mu=0$ we have $N^n \leq N^0 + n \dfrac{\tau}{\mathcal{C}}K$ for every $n$. Furthermore, if $\displaystyle\sum_{n=0}^{\infty}\sum_{j=1}^m \Pi_n^{(j)}< \infty$, then $ N^n $ converges to a finite positive value as $n \rightarrow \infty$, and if $\displaystyle\lim_{n \rightarrow \infty}\Pi^n = P>0$, then $\displaystyle\lim_{n \rightarrow \infty}N^n =\infty$.
    \end{itemize} 
\end{theorem}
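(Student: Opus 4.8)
The plan is to mimic the structure of the proof of Theorem \ref{th:EE}, exploiting the fact that the Shu--Osher form \eqref{Shu-OsherRK} writes each stage as a convex-type combination of forward Euler steps of size $\tau/\mathcal{C}$. First I would treat part a): since the coefficients $v_i$ and $\alpha_{ij}$ are non-negative and the condition \eqref{eq:RK_cond} guarantees $\tau/\mathcal{C} \leq \Delta t^*$, each inner bracket $U_{n-1}^{(j)} + \frac{\tau}{\mathcal{C}} F(U_{n-1}^{(j)})$ is exactly a forward Euler step to which part a) of Theorem \ref{th:EE} applies; hence by induction on the stage index $i$ (and using that consistency forces $v_i + \sum_j \alpha_{ij} $ to behave correctly, or more simply that a sum of non-negative quantities is non-negative) every stage value $S_{n-1}^{(i)}, E_{n-1}^{(i)}, I_{n-1}^{(i)}, R_{n-1}^{(i)}$ stays non-negative, and in particular $S^n, E^n, I^n, R^n \geq 0$. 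One subtlety: the bound $\Delta t^*$ involves $f(I_{n-1}^{(j)})$ evaluated at the stage values, so I would phrase the induction so that non-negativity of earlier stages is what makes the relevant Euler step positivity-preserving at the current stage, and invoke the a priori bound from the third remark if one wants a checkable condition.

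Next, for part b) with $\mu > 0$, I would sum the four components of \eqref{eq:SSPRK}. Because the recruitment and death terms are linear in $N$, summing the stage equations gives $N_{n-1}^{(i)} = v_i N^{n-1} + \sum_{j=1}^{i-1}\alpha_{ij}\big(N_{n-1}^{(j)} + \frac{\tau}{\mathcal{C}}(\Pi_{n-1}^{(j)} - \mu N_{n-1}^{(j)})\big)$, i.e. the total population obeys the SSP-RK discretization of the scalar linear ODE $N' = \Pi - \mu N$. The map $x \mapsto x + \frac{\tau}{\mathcal{C}}(\Pi_{n-1}^{(j)} - \mu x) = (1 - \frac{\tau}{\mathcal{C}}\mu)x + \frac{\tau}{\mathcal{C}}\Pi_{n-1}^{(j)}$ sends $[0, \mathcal{N}]$ into itself whenever $\frac{\tau}{\mathcal{C}} \leq \frac1\mu$ and $\Pi_{n-1}^{(j)} \leq K$, exactly as in the Euler proof; then, since $v_i + \sum_j \alpha_{ij} = 1$ for all stages (a consequence of the consistency of the underlying RK method, $\mathcal{C}\geq 0$), each stage value is a convex combination of points in $[0,\mathcal{N}]$ and hence lies in $[0,\mathcal{N}]$ — by induction on $i$ this gives $N^n \leq \mathcal{N}$ for all $n$. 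For the limit statement when $\Pi^k \to P$, I would reuse the squeezing argument from Theorem \ref{th:EE}: replace $\Pi_{n-1}^{(j)}$ by $P \mp \varepsilon$ eventually, observe that the one-step map $N^{n} \mapsto N^{n+1}$ is then affine with contraction factor the stability function $\phi(-\frac{\tau}{\mathcal{C}}\mu)$ of the RK method (which lies in $(0,1)$ for the relevant step sizes, or at least is a contraction on the invariant interval), and apply the Banach fixed point theorem to the lower/upper comparison sequences $\underline{N}^n, \overline{N}^n$, whose fixed points are $\frac{P \mp \varepsilon}{\mu}$, concluding $N^n \to \frac{P}{\mu}$.

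For the $\mu = 0$ case the summed scheme becomes $N^{n+1} = N^n + \frac{\tau}{\mathcal{C}}\sum_{j=1}^m b_j^{(\mathrm{eff})}\Pi_n^{(j)}$; using $\sum b_j = 1$ and $\Pi \leq K$ gives the telescoped bound $N^n \leq N^0 + n\frac{\tau}{\mathcal{C}}K$, and if $\sum_{n}\sum_{j}\Pi_n^{(j)} < \infty$ the increments are summable so $N^n$ converges to a finite (non-negative, by part a)) limit, while if $\Pi^n \to P > 0$ the increments are bounded below by a positive constant eventually, forcing $N^n \to \infty$. I expect the main obstacle to be the careful bookkeeping in part a): the positivity bound $\Delta t^*$ depends on the stage values through $f(I_{n-1}^{(j)})$, so one must argue by simultaneous induction over stages that (i) all stage values computed so far are non-negative, hence (ii) each forward-Euler building block used to produce the next stage is positivity preserving under \eqref{eq:RK_cond}; making this clean — and reconciling the stage-dependent $f(I^{(j)})$ with the a priori version of $\Delta t^*$ from the remarks — is the part that needs the most care, whereas part b) is essentially a transcription of the Euler argument to the convex-combination structure of the Shu--Osher form.
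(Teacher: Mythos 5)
Your proposal is correct in substance and, for part a), for the limit statement, and for the $\mu=0$ case, it follows essentially the paper's route: induction over the stages using non-negativity of $v_i,\alpha_{ij}$ and the fact that each bracket in the Shu--Osher form is a forward-Euler step of size $\tau/\mathcal{C}$ already covered by Theorem \ref{th:EE} (including the same handling of the stage-dependent $f(I_{n-1}^{(j)})$ via the a priori bound $B$), then the squeeze-and-Banach argument with comparison sequences $\underline{N}^n,\overline{N}^n$, then unrolling the stages when $\mu=0$. Where you genuinely differ is the bound $N^k\le N^0+K/\mu$: the paper proves it through the appendix Lemmas \ref{lem:shupresproof1} and \ref{lem:shupresproof2}, i.e.\ it introduces recursively defined coefficients $A_i,B_i$, shows $N^{n}\le A_{m+1}N^{n-1}+\frac{\tau}{\mathcal{C}}KB_{m+1}$ together with $\frac{\tau}{\mathcal{C}}\mu B_i=1-A_i$, and telescopes a geometric series; you instead note that the scalar Euler map $x\mapsto\bigl(1-\frac{\tau}{\mathcal{C}}\mu\bigr)x+\frac{\tau}{\mathcal{C}}\Pi$ leaves $[0,\mathcal{N}]$ invariant when $\tau\mu\le\mathcal{C}$ and $0\le\Pi\le K$, and that each stage is a convex combination (since $v_i+\sum_j\alpha_{ij}=1$) of points of that interval. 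Your argument is shorter and avoids the appendix machinery for this claim; what the paper's coefficients buy is an explicit affine one-step map whose slope $A_{m+1}$ is known, which is exactly what it reuses to identify the fixed point $\frac{P\mp\varepsilon}{\mu}$ (Lemma \ref{lem:shupresproof2}) and, in the modified form of the $\gamma_{ij}$ (Lemma \ref{lem:shupresproof3}), in the $\mu=0$ case. Your replacement for Lemma \ref{lem:shupresproof2} — an equilibrium of the autonomous linear ODE $N'=c-\mu N$ is automatically a fixed point of any consistent Runge--Kutta step — is valid.

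Two points need care. First, the contraction factor of the one-step map is the stability function evaluated at $-\tau\mu$ (this is the paper's $A_{m+1}$), not at $-\frac{\tau}{\mathcal{C}}\mu$; and strict contractivity deserves a sentence: $A_{m+1}=1-\frac{\tau}{\mathcal{C}}\mu B_{m+1}<1$ because, under $\tau\mu\le\mathcal{C}$, $B_{m+1}\ge 1-v_{m+1}>0$ for any consistent explicit scheme (if $v_{m+1}=1$ then all $\alpha_{m+1,j}$ vanish and the step would be the identity). Second, in the $\mu=0$ case your ``effective weights'' are the accumulated Shu--Osher weights, i.e.\ the paper's $\gamma_{m+1,j}$ from Lemma \ref{lem:shupresproof3}, and their sum is not $1$ in general: applying the scheme to $y'=c$ shows $\sum_j\gamma_{m+1,j}=\mathcal{C}$. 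So to reach the stated bound $N^n\le N^0+n\frac{\tau}{\mathcal{C}}K$ you must bound the weights termwise as the paper does ($0\le\gamma_{ij}\le1$), or settle for the safe bound $N^n\le N^0+n\tau K$; the summability/divergence conclusions for $N^n$ are unaffected by this bookkeeping.
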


\begin{proof}
We prove the statements by induction.
\begin{itemize}
    \item[a)] Let us assume that the non-negative property holds for $S^k$ ($k=1,2,\dots n-1$). Then, equation \eqref{ith_stageS} can be rewritten as
    \begin{align}
        	S_{n-1}^{(i)} = v_i S^{n-1} + \sum_{j=1}^{i-1} \alpha_{ij}\left(\frac{\tau}{\mathcal{C}}\Pi_{n-1}^{(j)} + S_{n-1}^{(j)}\left( 1 +
			 - \frac{\tau}{\mathcal{C}} \mu  - \frac{\tau}{\mathcal{C}}f(I_{n-1}^{(j)}) \right)\right)
    \end{align}
Then $S_{n-1}^{(1)} \geq 0$ holds, and if $\tau \leq \mathcal{C} \Delta t^*$, then since $\tau \leq \dfrac{\mathcal{C}}{\mu + B}\leq \dfrac{\mathcal{C}}{\mu + f(I_{n-1}^{(j)})}$, the non-negativity property holds also for $i=k$ if it was true for $i=1,2,\dots k-1$. Moreover, $f(I_{n-1}^{(j)}) \leq B$. Therefore, the non-negativity holds for $S$.

Now let us assume that the non-negative property holds for $E^k$ ($k=1,2,\dots n-1$). We can rewrite equation \eqref{ith_stageE} similarly as \eqref{ith_stageS}:
\begin{align}
        	E_{n-1}^{(i)} = v_i E^{n-1} + \sum_{j=1}^{i-1} \alpha_{ij}\left( 
			\frac{\tau}{\mathcal{C}}f(I_{n-1}^{(j)}) \cdot S_{n-1}^{(j)} + E_{n-1}^{(j)}\left( 1 - 
			\frac{\tau}{\mathcal{C}}(\mu + \sigma) \right)\right)
    \end{align}    
Therefore, $S_{n-1}^{(1)} \geq 0$ holds. Also, it is clear that the term $f(I_{n-1}^{(j)}) \cdot S_{n-1}^{(j)}$ is non-negative, and if $\tau \leq \mathcal{C} \Delta t^*$, then since $\tau \leq \dfrac{\mathcal{C}}{\mu + \sigma}$, the non-negativity property holds also for $i=k$ if it was true for $i=1,2,\dots k-1$. Therefore, the non-negativity holds for $E$.

Now we prove the property for $I$ and $R$. Let us assume that the non-negative property holds for $I^k$ and $R^k$ ($k=1,2,\dots n-1$). We can rewrite equations \eqref{ith_stageI} and \eqref{ith_stageR} as
\begin{equation}
    \begin{aligned}
        I_{n-1}^{(i)} &= v_i I^{n-1} + \sum_{j=1}^{i-1} \alpha_{ij}\left(
			\frac{\tau}{\mathcal{C}}\sigma E_{n-1}^{(j)} + \left( 1 - \frac{\tau}{\mathcal{C}}(\mu + \gamma)\right)I_{n-1}^{(j)} + \frac{\tau}{\mathcal{C}}\delta R_{n-1}^{(j)}\right),\\
			R_{n-1}^{(i)} &= v_i R_{n-1}^{n-1} + \sum_{j=1}^{i-1} \alpha_{ij}\left(
			\gamma \frac{\tau}{\mathcal{C}} I_{n-1}^{(j)} + \left(1- \frac{\tau}{\mathcal{C}} (\mu+\delta) \right)R_{n-1}^{(j)}\right).
    \end{aligned}
\end{equation}
It is easy to see that $I_{n-1}^{(1)}, R_{n-1}^{(1)}\geq 0$. Moreover, we know that $E_{n-1}^{(j)}\geq 0$ ($j=1,2,\dots, m+1$) by the previous claims. Now let us assume that $I_{n-1}^{(k)}\geq 0$ and $R_{n-1}^{(k)}\geq 0$ for $k=1,\dots, i-1$. If $\tau\leq \mathcal{C} \Delta t^*$, then $\tau \leq \dfrac{\mathcal{C}}{\mu+\gamma}$ and $\tau \leq \dfrac{\mathcal{C}}{\mu+\delta}$ is also true, meaning that the non-negative property also holds for $I_{n-1}^{(i)}$ and $R_{n-1}^{(i)}$. This concludes this part of the proof.

\item[b)] Let us assume that $\mu>0$. If we add up equations \eqref{ith_stageS}--\eqref{ith_stageR}, then we get
\begin{equation}
\begin{aligned}
    N_{n-1}^{(i)} =& S_{n-1}^{(i)} + E_{n-1}^{(i)} + I_{n-1}^{(i)} + R_{n-1}^{(i)} = \\
    = & v_i (S^{n-1}+E^{n-1} + I^{n-1} + R^{n-1})  +\\ & + \sum_{j=1}^{i-1}  \alpha_{ij} \left( S_{n-1}^{(j)} + E_{n-1}^{(j)} + I_{n-1}^{(j)} + R_{n-1}^{(j)} + \dfrac{\tau}{\mathcal{C}}\Pi_{n-1}^{(j)} - \dfrac{\tau}{\mathcal{C}} \mu (S_{n-1}^{(j)} + E_{n-1}^{(j)} + I_{n-1}^{(j)} + R_{n-1}^{(j)})  \right)=\\
    = & v_i N^{n-1} + \dfrac{\tau}{\mathcal{C}}\sum_{j=1}^{i-1} \alpha_{ij} \Pi_{n-1}^{(j)} + \sum_{j=1}^{i-1} \alpha_{ij} \left( N_{n-1}^{(j)} - \dfrac{\tau}{\mathcal{C}} \mu N_{n-1}^{(j)}   \right) = \\
    = & v_i N^{n-1} + \dfrac{\tau}{\mathcal{C}}\sum_{j=1}^{i-1} \alpha_{ij} \Pi_{n-1}^{(j)} + \left( 1 - \dfrac{\tau}{\mathcal{C}} \mu \right) \sum_{j=1}^{i-1} \alpha_{ij} N_{n-1}^{(j)} \leq \\
    \leq & v_i N^{n-1} + \dfrac{\tau}{\mathcal{C}} K \sum_{j=1}^{i-1} \alpha_{ij} + \left( 1 - \dfrac{\tau}{\mathcal{C}} \mu \right) \sum_{j=1}^{i-1} \alpha_{ij} N_{n-1}^{(j)}.
\end{aligned}
\end{equation}
Consistency requires that $\displaystyle v_i + \sum_{j=1}^{i-1} \alpha_{ij} = 1$, meaning that we have
\begin{align}
    N_{n-1}^{(i)} \leq v_i N^{n-1} + \dfrac{\tau}{\mathcal{C}}K(1-v_i) + \left( 1 - \dfrac{\tau}{\mathcal{C}} \mu \right) \sum_{j=1}^{i-1} \alpha_{ij} N_{n-1}^{(j)}.
\end{align}

Consistency also means that $v_1=1$, $0\leq v_i \leq 1$ and $0 \leq \alpha_{ij} \leq 1$. 

For the proof, we will use the following two lemmas (their proofs can be found in the Appendix):
\begin{lemma}\label{lem:shupresproof1}
    If we have a method of $m$ steps, then 
    $$N^{n} = N_{n-1}^{(m+1)} \leq N^{n-1} A_{m+1} + \dfrac{\tau}{\mathcal{C}} K B_{m+1},$$
where 
$$A_1 = 1, \qquad A_i = v_{i} + \left( 1 - \dfrac{\tau}{\mathcal{C}} \mu \right) \sum_{j=1}^{i-1} \alpha_{ij} A_j \quad (i=2,3,\dots,m), $$
and 
$$B_1=0, \qquad B_i = 1-v_i + \left( 1 - \dfrac{\tau}{\mathcal{C}} \mu \right) \sum_{j=1}^{i-1} \alpha_{ij} B_j \quad (i=2,3,\dots,m),$$
where $0 \leq A_i \leq 1$ and $0 \leq B_i \leq 1$ for $i=1,2,\dots m$.
\end{lemma}

\begin{lemma}\label{lem:shupresproof2}
    $\dfrac{\tau}{\mathcal{C}}\mu B_i = 1-A_i $ holds for every $i=1,2,\dots m$.
\end{lemma}

Then, if we have an $m$-step method, by Lemma \ref{lem:shupresproof1} the steps are
$$N^{n+1} \leq A_{m+1} N^n + \dfrac{\tau}{\mathcal{C}}K B_{m+1} \leq$$
$$\leq A_{m+1}^{n+1} N^0 + \dfrac{\tau}{\mathcal{C}}K B_m (\left(A_{m+1}\right)^n + \left(A_{m+1}\right)^{n-1} + \dots + \left(A_{m+1}\right)^{2} + A_{m+1} + 1) =$$
$$ =\left(A_{m+1}\right)^{n+1} N^0 + \dfrac{\tau}{\mathcal{C}}K B_{m+1} \dfrac{1-\left(A_{m+1}\right)^{n+1}}{1-A_{m+1}} = $$
By Lemma \ref{lem:shupresproof2} we get
$$ =\left(A_{m+1}\right)^{n+1} N^0 + \dfrac{\tau}{\mathcal{C}}K B_{m+1} \dfrac{1-\left(A_{m+1}\right)^{n+1}}{\dfrac{\tau}{\mathcal{C}}\mu B_{m+1}} = \left(A_{m+1}\right)^{n+1} N^0 + \dfrac{K}{\mu}\left(1-\left(A_{m+1}\right)^{n+1}\right).$$
By Lemma \ref{lem:shupresproof1} we have
$$ N^{n+1} \leq N^0 + \dfrac{K}{\mu}.$$

For the second claim, we can use similar arguments as the proof of Theorem \ref{th:EE}. Namely, if $\Pi^k \rightarrow P$, then for any arbitrary fixed value of $\varepsilon$ there exists an $N_1(\varepsilon)$ index for which for every $n \geq N_1(\varepsilon)$ we have $P-\varepsilon \leq N^n \leq P+\varepsilon$.

Now let us consider the following numerical scheme: $\underline{N}^n=N^n$ for $n\leq N_1(\varepsilon)$ and for $n>N_1(\varepsilon)$ we have
$$ \underline{N}^n = \underline{N}^{n-1} A_{m+1} + \dfrac{\tau}{\mathcal{C}} (P-\varepsilon) B_{m+1}, $$
where $A_{m+1}$, $B_{m+1}$ are as defined before. It can be seen by induction that $\underline{N}^n \leq N^n$. Moreover, the sequence $\underline{N}^n$ converges, since the function
$$ h_2(x) = x A_{m+1} + \dfrac{\tau}{\mathcal{C}} (P-\varepsilon) B_{m+1} $$
is a contraction, and it maps the interval $\left[0, N^0+\dfrac{K}{\mu} \right]$ onto itself, and then we can use Banach's fixed point theorem. The fixed point of this function is
$$ x_0 = \dfrac{\dfrac{\tau}{\mathcal{C}} (P-\varepsilon) B_{m+1}}{1-A_{m+1}} = \dfrac{P-\varepsilon}{\mu}, $$
where we used Lemma \ref{lem:shupresproof2}. This means that $\underline{N}^n \rightarrow \dfrac{P-\varepsilon}{\mu}$, meaning that for the previous fixed value of $\varepsilon$ there exists an index $N_2(\varepsilon)$ for which $\underline{N}^n \geq \dfrac{P}{\mu} - \varepsilon$. Thus, $N^n > \underline{N}^n > \dfrac{P}{\mu}-\varepsilon$ holds for $n > \max\{ N_1(\varepsilon), N_2(\varepsilon) \}$.

By a similar argument, if we consider the sequence $\overline{N}^n$ for which we have $\overline{N}^n = N^n$ for $n\leq N_1(\varepsilon)$ and for $n>N_1(\varepsilon)$
$$ \overline{N}^n = \overline{N}^{n-1} A_{m+1} + \dfrac{\tau}{\mathcal{C}} (P+\varepsilon) B_{m+1}, $$
then $\overline{N}^n \geq N^n$ and since $\overline{N}^n \rightarrow \dfrac{P + \varepsilon}{\mu}$, then there exists an index $N_3(\varepsilon)$ for which $N^n \leq \overline{N}^n \leq \dfrac{P}{\mu} + \varepsilon$. Then, since for $n>\max\{ N_1(\varepsilon), N_2(\varepsilon), N_3(\varepsilon) \}$ we have $\dfrac{P}{\mu} - \varepsilon < N^n < \dfrac{P}{\mu} + \varepsilon$, by definition $N^n \rightarrow \dfrac{P}{\mu}$.\medskip

In the $\mu=0$ case, the result of Lemma \ref{lem:shupresproof1} reduces to
\begin{align}
    N^n = N^{(m)} \leq N^{n-1} \widetilde{A}_{m+1} + \dfrac{\tau}{\mathcal{C}} K \widetilde{B}_{m+1},
\end{align}
where since $\widetilde{A}_1=1$ and $\displaystyle \widetilde{A}_i = v_i + \sum_{j=1}^{i-1} \alpha_{ij} \widetilde{A}_j$, by using $\displaystyle v_i + \sum_{j=1}^{i-1} \alpha_{ij}=1$ we get that $\widetilde{A}_i=1$ for every $j$ index. Similarly, we have the $\widetilde{B}_1=0$ and $\displaystyle \widetilde{B}_i = 1-v_i + \sum_{j=1}^{i-1} \alpha_{ij} \widetilde{B}_j$, and $0 \leq \widetilde{B}_i \leq 1$ also holds for every value of $j$. Then,
$$N^{n} \leq  N^{n-1} + \dfrac{\tau}{\mathcal{C}}K \widetilde{B}_{m+1} \leq N^0 + n \dfrac{\tau}{\mathcal{C}}K \widetilde{B}_{m+1}  \leq N^0 + n \dfrac{\tau}{\mathcal{C}}K .$$

Moreover, in this case
$$ N^{n} = N_{n-1}^{(m+1)} = v_{m+1} N^{n-1} + \dfrac{\tau}{\mathcal{C}} \sum_{j=1}^{m} \alpha_{m+1,j} \Pi_{n-1}^{(j)} + \sum_{j=1}^{m} \alpha_{m+1,j} N_{n-1}^{(j)} $$
\begin{lemma}\label{lem:shupresproof3}
    $$N_{n-1}^{(i)} = N^{n-1} + \dfrac{\tau}{\mathcal{C}} \sum_{j=1}^{i-1} \gamma_{ij} \Pi_{n-1}^{(j)}, $$
    where
    $$\gamma_{ij} = \alpha_{ij} + \sum_{k=j+1}^{i-1} \alpha_{ik} \gamma_{kj}. $$
\end{lemma}
(The proof of this claim can be found in the Appendix.)

Therefore, if $\displaystyle\sum_{n=0}^{\infty} \sum_{j=1}^{m+1} \Pi_{n}^{(j)}<\infty$, then
$$ \lim_{n \rightarrow \infty} N^n = N^0 + \dfrac{\tau}{\mathcal{C}} \sum_{n=0}^{\infty} \sum_{j=1}^{m+1} \gamma_{mj} \Pi_{n}^{(j)} \leq N^0 + \dfrac{\tau}{\mathcal{C}}\sum_{n=0}^{\infty} \sum_{j=1}^{m+1} \Pi_{n}^{(j)} <\infty. $$
Here we used that $0 \leq \gamma_{ij}\leq 1$, which can be seen by induction: for the first few elements it is true, and then if $\gamma_{ij}\leq 1$ holds for $i< k$, $j<\ell$, then (here $k<\ell$)
$$ \gamma_{k \ell} = \alpha_{k \ell} + \sum_{p=\ell+1}^{k-1} \alpha_{k p} \gamma_{p \ell} \leq \alpha_{k \ell} + \sum_{p=\ell+1}^{k-1} \alpha_{k p} \leq 1 - v_k \leq 1. $$

Furthermore, if $\Pi^n \rightarrow P>0$ as $n \rightarrow \infty$, then for a fixed value of $0<\varepsilon<P$ there exists an index $N_4(\varepsilon)$ for which for every $n>N_4(\varepsilon)$ we have \linebreak $\Pi^n > P-\varepsilon$. Then, we have
$$ \lim_{n \rightarrow \infty} N^n = N^0 + \dfrac{\tau}{\mathcal{C}} \sum_{n=0}^{\infty} \sum_{j=1}^{m} \gamma_{mj} \Pi_{n}^{(j)} =$$
$$ = N^0 + \dfrac{\tau}{\mathcal{C}} \sum_{n=0}^{N_4(\varepsilon)} \sum_{j=1}^{m} \gamma_{mj} \Pi_{n}^{(j)} + \dfrac{\tau}{\mathcal{C}} \sum_{n=N_4(\varepsilon) + 1}^{\infty} \sum_{j=1}^{m} \gamma_{mj} \Pi_{n}^{(j)} > $$
$$ > N^0 + \dfrac{\tau}{\mathcal{C}} \sum_{n=0}^{N_4(\varepsilon)} \sum_{j=1}^{m} \gamma_{mj} \Pi_{n}^{(j)} + \dfrac{\tau}{\mathcal{C}} \sum_{n=N_4(\varepsilon) + 1}^{\infty} \sum_{j=1}^{m} \gamma_{mj} (P-\varepsilon) =\infty, $$
where in the last step we used the fact that $\displaystyle\sum_{j=1}^{m} \gamma_{mj} (P-\varepsilon)$ is a positive constant. 
\end{itemize}

This concludes the proof of Theorem \ref{th:RK}.
\end{proof}

\begin{remark}
    If our Runge-Kutta method is explicit, then $v_i=1$ holds, meaning that the first stage of the method is always $X^{(1)}=X^n$ for every $X \in \{ S,I,E,R\}$. From now on we will omit this first stage, and shift every index of the stages accordingly, meaning that $\widetilde{S^{(1)}}:=S^{(2)}$, $\widetilde{S^{(2)}}:=S^{(3)}$ etc. From now on we will also omit the use of these tildes.
\end{remark}

\section{Numerical experiments}\label{sec:num}

In this section, we conduct some numerical experiments and see whether the theoretical results of the previous sections are confirmed by these.

For our numerical tests, we are using the parameters $\mu=0.05$, $\sigma=0.25$, $\gamma=0.1867$, $\delta=0.011$, final time $t_f = 1000$ and the incidence function
$$g(x) = 0.0115 \cdot e^{-0.001 x}$$
(for details, see \cite{wang}). For the recruitment rate function $\Pi(t)$ \cite{pelda_recruit}, we apply three different choices:
\begin{itemize}
    \item[A)] $\Pi(t) = \kappa \left( \dfrac{2}{\pi}\mathrm{arctg}(t) + \dfrac{\sin(t)}{t} \right)$,
    \item[B)] $\Pi(t) = \kappa \left( \dfrac{1}{\pi}\mathrm{arctg}(t) + \dfrac{1}{2} \right)$,
    \item[C)] $\Pi(t) = \kappa \left( -t e^{-t} + 1 \right)$,
\end{itemize}
where $\kappa \in \mathbb{R}$. We are going to apply the choice $\kappa=\mu=0.05$ (so that \linebreak $\displaystyle\lim_{t \rightarrow \infty} \Pi(t) = \mu$). The initial conditions used in the tests are $S(0)=0.2$, $E(0)=0.6$, $I(0)=0.2$ and $R(0)=0$.

In Table \ref{bounds_table_a} we can see the sufficient bounds given by Theorems \ref{th:EE} and \ref{th:RK} (denoted here by $\tau_t$) and the 'real'  bounds given by numerical experiments (denoted by $\tau_r$), i.e. the values of $\tau$ above which the method will not preserve the desired properties. As we can see, the theoretical bounds are relatively close to the real, necessary bounds in most cases. 

\begin{table}
\centering
    \small
    \begin{tabular*}{\textwidth}{@{\extracolsep{\fill}}c*{5}{|c@{\hspace{7pt}}c@{\hspace{7pt}}c}}
		\toprule
		\multicolumn{1}{c}{$\Pi(t)$} & 
        \multicolumn{1}{c}{$\tau_t$ (EE)} &
        \multicolumn{1}{c}{$\tau_r$ (EE)} &
		\multicolumn{1}{c}{$\tau_r/\tau_t$} & 
        \multicolumn{1}{c}{$\tau_t$ (RK2)} &
        \multicolumn{1}{c}{$\tau_r$ (RK2)} &
		\multicolumn{1}{c}{$\tau_r/\tau_t$} & 
  \\
		\midrule
A) & $3.3333$ & $3.5223$ & $1.0567$ & $3.3333$ & $4.5688$ & $1.3706$ \\ 
B) & $3.3333$ & $3.5223$ & $1.0567$ & $3.3333$ & $4.5697$ & $1.3709$ \\ 
C) & $3.3333$ & $3.5223$ & $1.0567$ & $3.3333$ & $4.5678$ & $1.3703$ \\ 	
		\bottomrule
	\end{tabular*}

    \begin{tabular*}{\textwidth}{@{\extracolsep{\fill}}c*{5}{|c@{\hspace{7pt}}c@{\hspace{7pt}}c}}
		\toprule
		\multicolumn{1}{c}{$\Pi(t)$} & 
        \multicolumn{1}{c}{$\tau_t$ (RK3)} &
        \multicolumn{1}{c}{$\tau_r$ (RK3)} &
		\multicolumn{1}{c}{$\tau_r/\tau_t$} & 
        \multicolumn{1}{c}{$\tau_t$ (RK4)} &
        \multicolumn{1}{c}{$\tau_r$ (RK4)} &
		\multicolumn{1}{c}{$\tau_r/\tau_t$} & 
  \\
		\midrule
A)  & $3.3333$ & $5.5164$ & $1.6549$ & $20.0000$ & $29.8408$ & $1.4920$\\ 
B) & $3.3333$ & $5.5167$ & $1.6550$ & $20.0000$ & $29.8648$ & $1.4932$ \\ 
C) & $3.3333$ & $5.5203$ & $1.6561$ & $20.0000$ & $29.7721$ & $1.4886$ \\ 
		\bottomrule
	\end{tabular*}
 
		\caption{The bounds for the different order methods for different choices of $\Pi(t)$, with parameters $\mu=0.05$, $\sigma=0.25$, $\gamma=0.1867$, $\delta=0.011$, $g(x)=0.0115 \cdot e^{-0.001x}$
		 and final time $t_f=1000$.}
    \label{bounds_table_a}
\end{table}

In Figure \ref{fig:bad-timestep} we also observed the behavior of the method when we use time-steps larger than the bound $\tau_r$. For convenience, here we use final time $t_f=30$, and we also use the choice C) for the function $\Pi(t)$ along with a second order method. In this case, the bound we get from the theorem is $\tau_t = 3.3333$. First, we used a time-step that was bigger, namely $\tau=4.8$. As we can see, function $I$ gets negative, thus violating the property we would like to preserve. However, if we use a time-step below the bound, namely $\tau=3.3$, then the method behaves as expected.

\begin{figure}
    \centering
    \includegraphics[scale=0.285]{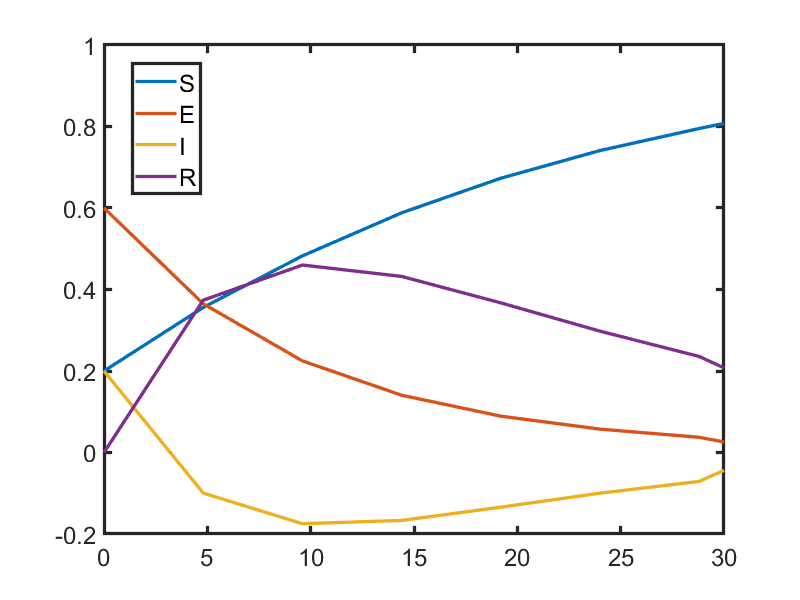}
    \includegraphics[scale=0.285]{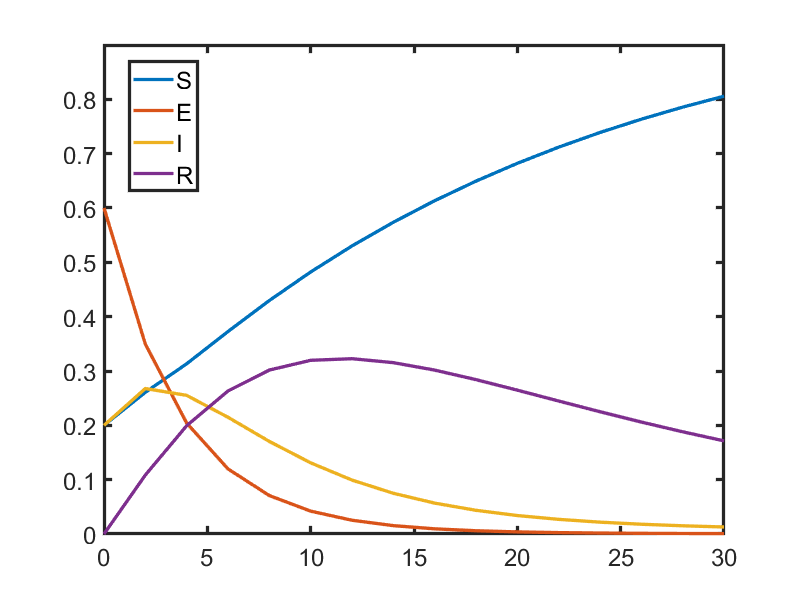}
    \caption{The second order method with timestep $\tau=4.8$ (left) and $\tau=3.3$ (right). The method violates the property on the left figure but behaves as expected on the right one.}
    \label{fig:bad-timestep}
\end{figure}

In Figure \ref{fig:conv-order}, we examined the order of the different methods with choice A) for the function $\Pi(t)$. For this, we calculated the solution by using the fourth-order method and a very small step size ($\tau=\tau_t \cdot 2^{-10}$ where $\tau_t$ is the bound given by Theorem \ref{th:RK}). Then, this solution of high precision is regarded as a reference solution, and then we observe the difference between this reference solution and the solutions we get by using different step sizes. As we can see, the methods behave as expected, i.e. the forward Euler method is of first, the SSPRK2 method is of second order, and so on.

\begin{figure}[!h]
    \centering
    \includegraphics[scale=0.4]{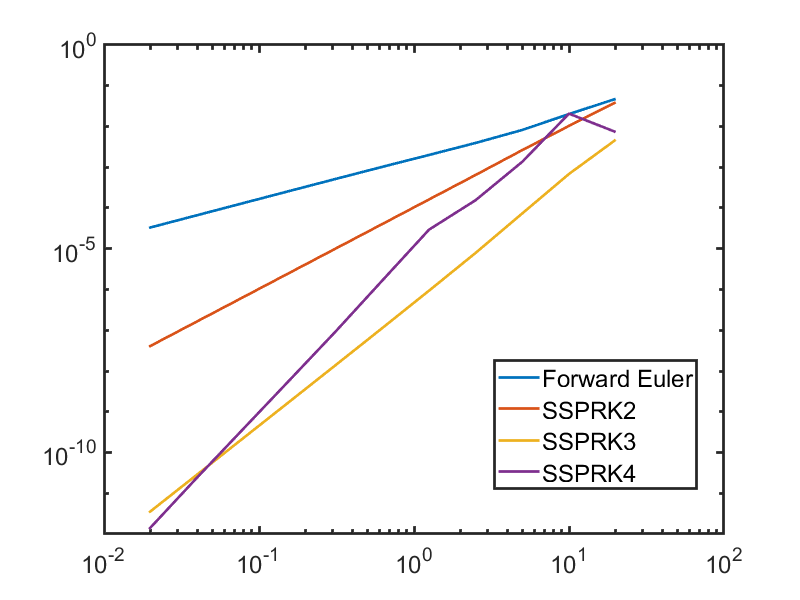}
    \caption{The orders of the different methods. As we can see, they behave as expected, i.e. the second order method attains an order of two, and so on.}
    \label{fig:conv-order}
\end{figure}

\section*{Appendix}
Here we list the proofs of Lemmas \ref{lem:shupresproof1}, \ref{lem:shupresproof2} and \ref{lem:shupresproof3}.

\begin{proof}[of Lemma \ref{lem:shupresproof1}]
First we prove the first claim. It can be seen that it holds for the first few steps, and then by induction,
$$\begin{aligned}
    N_{n-1}^{(i)} & \leq v_i N^{n-1} + \dfrac{\tau}{\mathcal{C}}K(1-v_i) + \left( 1 - \dfrac{\tau}{\mathcal{C}} \mu \right) \sum_{j=1}^{i-1} \alpha_{ij} N_{n-1}^{(j)}\leq\\
    & \leq v_i N^{n-1} + \dfrac{\tau}{\mathcal{C}}K(1-v_i) + \left( 1 - \dfrac{\tau}{\mathcal{C}} \mu \right) \sum_{j=1}^{i-1} \alpha_{ij} \left( N^{n-1} A_{j} + \dfrac{\tau}{\mathcal{C}} K B_{j} \right) =\\
    & = N^{n-1} \left( v_i + \left( 1 - \dfrac{\tau}{\mathcal{C}} \mu \right) \sum_{j=1}^{i-1} \alpha_{ij} A_j \right) + \dfrac{\tau}{\mathcal{C}}K\left(1-v_i + \left( 1 - \dfrac{\tau}{\mathcal{C}} \mu \right) \sum_{j=1}^{i-1} \alpha_{ij} B_j \right) =\\
    & = N^{n-1} A_i + \dfrac{\tau}{\mathcal{C}}K B_i.
\end{aligned}$$
Now we prove the property of $A_i$. By induction: $0 \leq A_1 \leq 1$ and if it holds for $A_j$, $j=1,2,\dots, i-1$, then
    $$0 \leq A_i = v_{i} + \left( 1 - \dfrac{\tau}{\mathcal{C}} \mu \right) \left(\alpha_{i1}A_1  + \alpha_{i2}A_2 + \dots + \alpha_{i,i-1} A_{i-1} \right) \leq v_{i} + \alpha_{i1}  + \alpha_{i2} + \dots + \alpha_{i,i-1} = 1$$
The property can be proved similarly for $B_i$ by induction: $0 \leq B_1=0 \leq 1$, $0\leq B_2 = 1-v_2\leq 1$ and if it holds for $B_j$, $j=1,2,\dots, i-1$, then
    $$0 \leq B_i = 1-v_i + \left( 1 - \dfrac{\tau}{\mathcal{C}} \mu \right) \left(\alpha_{i2}B_2  + \alpha_{i3}B_3 + \dots + \alpha_{i,i-1} B_{i-1}\right) \leq $$
    $$ \leq 1-v_i + \alpha_{i2}  + \alpha_{i3}B_3 + \dots + \alpha_{i,i-1} B_{i-1} =1-v_i + v_i = 1.  $$
\end{proof}

\begin{proof}[of Lemma \ref{lem:shupresproof2}]
    We prove the statement by induction. If $i=1$, then the statement is trivial. Moreover, for $i=2$ we have
    $$ \dfrac{\tau}{\mathcal{C}}\mu B_2 = \dfrac{\tau}{\mathcal{C}}\mu (1-v_2) = \dfrac{\tau}{\mathcal{C}}\mu \alpha_{21}  $$
    and similarly
    $$ 1-A_2 = 1- \left( v_2 + \left( 1-\dfrac{\tau}{\mathcal{C}}\mu \right) \alpha_{21} \right) = \alpha_{21} - \left( 1-\dfrac{\tau}{\mathcal{C}}\mu \right) \alpha_{21} = \dfrac{\tau}{\mathcal{C}}\mu\alpha_{21}. $$
Now let us assume that the statement holds for $j=1,2,\dots i-1$ and we would like to prove it for $j=i$. Then,
$$ \dfrac{\tau}{\mathcal{C}}\mu B_i =  \dfrac{\tau}{\mathcal{C}}\mu (1-v_i) + \dfrac{\tau}{\mathcal{C}}\mu\left( 1 - \dfrac{\tau}{\mathcal{C}} \mu \right) \sum_{j=1}^{i-1} \alpha_{ij} B_j = $$
$$ = \dfrac{\tau}{\mathcal{C}}\mu \left(\sum_{j=1}^{i-1} \alpha_{ij}\right) + \left( 1 - \dfrac{\tau}{\mathcal{C}} \mu \right) \sum_{j=1}^{i-1} \alpha_{ij} \dfrac{\tau}{\mathcal{C}}\mu B_j =$$
    Now we use the assumption that the statement holds for $j=1,2,\dots i-1$ meaning that $\dfrac{\tau}{\mathcal{C}}\mu B_j = 1- A_j$ for $j=1,2,\dots i-1$:
 $$ = \dfrac{\tau}{\mathcal{C}}\mu \left(\sum_{j=1}^{i-1} \alpha_{ij}\right) + \left( 1 - \dfrac{\tau}{\mathcal{C}} \mu \right) \sum_{j=1}^{i-1} \alpha_{ij} (1-A_j) = $$
 $$= \dfrac{\tau}{\mathcal{C}}\mu \left(\sum_{j=1}^{i-1} \alpha_{ij}\right) + \left( 1 - \dfrac{\tau}{\mathcal{C}} \mu \right) \left(\sum_{j=1}^{i-1} \alpha_{ij}  - \sum_{j=1}^{i-1} \alpha_{ij} A_j \right)=  $$   
$$= \dfrac{\tau}{\mathcal{C}}\mu \left(\sum_{j=1}^{i-1} \alpha_{ij}\right) + \sum_{j=1}^{i-1} \alpha_{ij}  - \sum_{j=1}^{i-1} \alpha_{ij} A_j - \dfrac{\tau}{\mathcal{C}}\mu \sum_{j=1}^{i-1} \alpha_{ij} + \dfrac{\tau}{\mathcal{C}}\mu \sum_{j=1}^{i-1} \alpha_{ij} A_j =  $$
$$ =  \sum_{j=1}^{i-1} \alpha_{ij} -\left( 1 - \dfrac{\tau}{\mathcal{C}} \mu \right) \sum_{j=1}^{i-1} \alpha_{ij} A_j.$$
Similarly, for the right-hand side we have
$$ 1 - A_i = 1-\left(  v_{i} + \left( 1 - \dfrac{\tau}{\mathcal{C}} \mu \right) \sum_{j=1}^{i-1} \alpha_{ij} A_j  \right) = 1-v_i - \left( 1 - \dfrac{\tau}{\mathcal{C}} \mu \right) \sum_{j=1}^{i-1} \alpha_{ij} A_j = $$
$$ = \sum_{j=1}^{i-1} \alpha_{ij} - \left( 1 - \dfrac{\tau}{\mathcal{C}} \mu \right) \sum_{j=1}^{i-1} \alpha_{ij} A_j, $$
which gives the statement of the lemma.
\end{proof}

\begin{proof}[of Lemma \ref{lem:shupresproof3}]
    We prove the statement by induction in $i$. It is clear that $N_{n-1}^{(1)}=N^{n-1}$ and
    $$ N_{n-1}^{(2)} = v_2 N^{n-1} + \dfrac{\tau}{\mathcal{C}} \alpha_{21} \Pi_{n-1}^{(1)} +  \alpha_{21} N_{n-1}^{(1)} = N^{n-1} + \dfrac{\tau}{\mathcal{C}} \gamma_{21} \Pi_{n-1}^{(1)}. $$
Then, if the statement holds for $i=1,2,\dots,\ell-1$, then we prove it for $i=\ell$:
$$ N_{n-1}^{(\ell)} = v_{\ell} N^{n-1} + \dfrac{\tau}{\mathcal{C}} \sum_{j=1}^{\ell-1} \alpha_{\ell j} \Pi_{n-1}^{(j)} + \sum_{j=1}^{\ell-1} \alpha_{\ell j} N_{n-1}^{(j)} = $$
$$ = v_{\ell} N^{n-1} + \dfrac{\tau}{\mathcal{C}} \sum_{j=1}^{\ell-1} \alpha_{\ell j} \Pi_{n-1}^{(j)} + \sum_{j=1}^{\ell-1} \alpha_{\ell j} \left( N^{n-1} + \dfrac{\tau}{\mathcal{C}} \sum_{i=1}^{j-1} \gamma_{ji} \Pi_{n-1}^{(i)} \right) = $$
$$  = \left( v_{\ell} + \sum_{j=1}^{\ell-1} \alpha_{\ell j}\right)  N^{n-1} + \dfrac{\tau}{\mathcal{C}} \sum_{j=1}^{\ell-1} \alpha_{\ell j}\left( \Pi_{n-1}^{(j)} +  \sum_{i=1}^{j-1} \gamma_{ji} \Pi_{n-1}^{(i)} \right) = $$
$$  =  N^{n-1} + \dfrac{\tau}{\mathcal{C}} \sum_{j=1}^{\ell-1} \alpha_{\ell j}\left( \Pi_{n-1}^{(j)} +  \sum_{i=1}^{j-1} \gamma_{ji} \Pi_{n-1}^{(i)} \right) . $$
The last term can be rewritten as
$$ \sum_{j=1}^{\ell-1} \alpha_{\ell j}\left( \Pi_{n-1}^{(j)} +  \sum_{i=1}^{j-1} \gamma_{ji} \Pi_{n-1}^{(i)} \right) = \sum_{j=1}^{\ell-1} \alpha_{\ell j} \Pi_{n-1}^{(j)} + \alpha_{\ell j} \left( \gamma_{j1} \Pi_{n-1}^{(1)}  + \dots + \gamma_{j,j-1} \Pi_{n-1}^{(j-1)} \right) = $$
\begin{align*}
   = \alpha_{\ell 1} \Pi_{n-1}^{(1)} + \alpha_{\ell 2} \Pi_{n-1}^{(2)} + \alpha_{\ell 2} \gamma_{21} \Pi_{n-1}^{(1)} + \alpha_{\ell 3} \Pi_{n-1}^{(3)} + \alpha_{\ell 3} \gamma_{31} \Pi_{n-1}^{(1)} + \alpha_{\ell 3} \gamma_{32} \Pi_{n-1}^{(2)} + \dots + \\
   +\alpha_{\ell,\ell-1} \Pi_{n-1}^{(\ell-1)} + \alpha_{\ell,\ell-1} \left( \gamma_{\ell-1,1} \Pi_{n-1}^{(1)}  + \dots + \gamma_{\ell-1,\ell-2} \Pi_{n-1}^{(\ell-2)} \right)  =
\end{align*}
\begin{align*}
   =\Pi_{n-1}^{(1)} \left( \alpha_{\ell 1} + \alpha_{\ell 2} \gamma_{21} + \alpha_{\ell 3} \gamma_{31} + \dots + \alpha_{\ell,\ell-1 } \gamma_{\ell-1,1} \right) + \\
   + \Pi_{n-1}^{(2)} \left( \alpha_{\ell 2} + \alpha_{\ell 2} \gamma_{22} + \alpha_{\ell 3} \gamma_{32} + \dots + \alpha_{\ell,\ell-1 } \gamma_{\ell-1,2} \right)+ \dots + \alpha_{\ell,\ell-1} \Pi_{n-1}^{(\ell-1)} =
\end{align*}
$$ = \Pi_{n-1}^{(1)} \gamma_{\ell 1} + \Pi_{n-1}^{(2)} \gamma_{\ell 2} + \dots + \Pi_{n-1}^{(\ell-1)} \gamma_{\ell, \ell-1} = \sum_{j=1}^{\ell-1} \gamma_{\ell j} \Pi_{n-1}^{(j)}. $$
Then,
$$ N^{(\ell)} = N^{n-1} + \dfrac{\tau}{\mathcal{C}}  \sum_{j=1}^{\ell-1} \gamma_{\ell j} \Pi_{n-1}^{(j)},$$
    which concludes the proof of the lemma.
\end{proof}

\section*{Acknowledgement}
This research has been supported by the National Research, Development and Innovation Office – NKFIH, grant no. K137699.
The research reported in this paper is also part of project no. BME-NVA-02, implemented with the support provided by the Ministry of Innovation and Technology of Hungary from the National Research, Development and Innovation Fund, financed under the TKP2021 funding scheme.

%
%

\end{document}